\documentclass[12pt]{amsart}
\usepackage{latexsym,fancyhdr,amssymb,color,amsmath,amsthm,graphicx,listings,comment}
\usepackage[section]{placeins}
\pagestyle{fancy}
\newtheorem{thm}{Theorem}

\newtheorem{coro}{Corollary}
\setlength{\parindent}{0cm}
\let\paragraph\subsection

\title{Arboricity and Acyclic Chromatic Number}

\fancyhead{}
\fancyhead[LO]{\fontsize{9}{9} \selectfont OLIVER KNILL}
\fancyhead[LE]{\fontsize{9}{9} \selectfont ARBORICITY-CHROMATIC} 

\setlength{\parindent}{0cm} \setlength{\topmargin}{-1.0cm} \setlength{\headheight}{0.5cm} \setlength{\textheight}{23cm}
\setlength{\oddsidemargin}{0cm} \setlength{\evensidemargin}{0.0cm} \setlength{\textwidth}{17.0cm}


\author{Oliver Knill}
\date{November 5, 2023}
\address{Department of Mathematics \\ Harvard University \\ Cambridge, MA, 02138 }
\subjclass{}

\keywords{Arboricity, Acyclic chromatic number}

\begin{document}
\maketitle

\begin{abstract}
A theorem of Hakimi, Mitchem and Schmeichel from 1996 states that the edge 
arboricity arb(G) of a graph is bounded above by the acyclic chromatic number acy(G). 
We can improve this HMS inequality by 1, if acy(G) is even. We review also
results about acyclic chromatic numbers in the context of a Gr\"unbaum conjecture
from 1973. 
\end{abstract}

\section{Summary} 

\paragraph{}
Let $G=(V,E)$ be a {\bf finite simple graph} with vertex set $V$ and edge set $E$. 
The {\bf arboricity} ${\rm arb}(G)$, introduced by Nash-Williams \cite{NashWilliams} 
in 1961, is the minimal number of forests partitioning the edge set $E$.
The {\bf chromatic number} ${\rm chr}(G)$, first considered by Francis Guthrie in 1852
in the context of map coloring, is the maximal number of independent sets 
partitioning the vertex set $V$. The {\bf vertex arboricity} ${\rm ver}(G)$, 
introduced in 1968 \cite{ChartrandKronkWall1968} as {\bf point arboricity}, 
is the maximal number of forests partitioning 
$V$ such that each forest generates itself in $G$. Already \cite{ChartrandKronkWall1968}
point out ${\rm ver}(G) \leq {\rm chr}(G) \leq 2 {\rm ver}(G)$ because every color class is
a forest and every forest is 2-colorable and ${\rm ver}(G) \leq [(1+{\rm deg}(G))/2]$ where
$[x]$ is the least integer not less than $x$ and ${\rm deg}(G)$ is the maximal vertex degree
of $G$. They showed as well ${\rm ver}(G) \leq 3$ for planar graphs, which 
parallels ${\rm arb}(G) \leq 3$ but is unrelated. Determining ${\rm ver}(G)$ is a NP-hard,
while finding ${\rm arb}(G)$ is a polynomial task essentially due to the Nash-Williams formula. 
The {\bf acyclic chromatic number} ${\rm acy}(G)$, 
introduced in 1973 by Gr\"unbaum \cite{Gruenbaum1973},
is the smallest integer for which there is an {\bf acyclic vertex coloring}, meaning that 
all Kempe chains are forests. {\bf Kempe chains} of a vertex-colored graph are sub-graphs of $G$ 
containing only 2 colors. By definition of coloring, Kempe chains are triangle-free but
they can can have cyclic sub-graphs. For an acyclic coloring, all Kempe chains are forests. 

\paragraph{}
All these functionals on networks deal with trees, forests and colors and 
colored trees or forests. The following {\bf foliage inequalities} provides a link.
Besides ${\rm ver}(G) \leq {\rm chr}(G) \leq 2 {\rm ver}(G)$ we have
$$   \frac{{\rm ver}(G)}{2} \leq \frac{{\rm chr}(G)}{2} 
                            \leq {\rm arb}(G) \leq {\rm acy}(G) \; . $$ 
{\bf Tree notions} and {\bf color notions} are interwoven with each other. 
Only the last of the above three inequalities needs some work to be proven. 
We actually will reprove it and improve on it slightly. 

\paragraph{}
The first inequality holds because every vertex coloring is also a vertex forest.
Indeed, each independent set is a forest in which every tree is a seed, a single point.
The second inequality (an exercise in \cite{BM}) 
holds because every forest has chromatic number $1$ (if all trees are seeds) or $2$ (else),
the reason for the later is that every tree can be colored with 2 colors.
The last inequality follows from a result of 
Hakimi, Mitchem and Schmeichel (HMS) from 1996 \cite{HakimiMitchemSchmeichel1996},
who proved that the {\bf vertex star arboricity} is less or equal than 
${\rm acy}(G)$, implying that ${\rm arb}(G) \leq {\rm acy}(G)$. 
{\bf Vertex star arboricity} ${\rm sta}(G)$ sandwiches vertex arboricity because every
tree can be covered with 2 type of stars so that
${\rm sta}(G)/2 \leq {\rm ver}(G) \leq {\rm sta}(G)$. 
(there is also an edge star arboricity that relates in the same way to edge arboricity)
We directly address the HMS inequality and improve it slightly: 

\begin{thm}[Refined HMS inequality] 
${\rm arb}(G) \leq {\rm acy}(G)-1$ if ${\rm acy}(G)$ is even.
Otherwise ${\rm arb}(G) \leq {\rm acy}(G)$.
\end{thm}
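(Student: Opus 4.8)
The plan is to reprove the HMS inequality in a form that makes the even/odd dichotomy immediate, by bundling the Kempe forests according to a proper edge coloring of a complete graph on the set of colors. Fix an acyclic vertex coloring of $G$ with $k = {\rm acy}(G)$ colors and color classes $V_1,\dots,V_k$. By definition of an acyclic coloring, each Kempe chain $F_{ij} := G[V_i \cup V_j]$ with $i \neq j$ is a forest. Since a proper coloring gives adjacent vertices distinct colors, every edge of $G$ lies in exactly one $F_{ij}$, namely the one indexed by the colors of its two endpoints. Hence $\{F_{ij}\}_{1 \le i < j \le k}$ is a partition of $E(G)$ into $\binom{k}{2}$ forests.

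Next I would move to the complete graph $K_k$ on the color set $\{1,\dots,k\}$, whose edges are precisely the index pairs $\{i,j\}$ labelling the Kempe forests. The key observation is that if $M \subseteq E(K_k)$ is a matching on the colors, then the vertex sets $V_i \cup V_j$ with $\{i,j\} \in M$ are pairwise disjoint, so $\bigcup_{\{i,j\}\in M} F_{ij}$ is a vertex-disjoint union of forests and is therefore itself a forest in $G$. Consequently, any proper edge coloring of $K_k$ into matchings $M_1,\dots,M_q$ produces, via $\mathcal{F}_c := \bigcup_{\{i,j\}\in M_c} F_{ij}$, a partition of $E(G)$ into $q$ forests, so ${\rm arb}(G) \le q$.

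To finish I would invoke the classical value of the chromatic index of a complete graph: $K_k$ has a $1$-factorization into $k-1$ perfect matchings when $k$ is even, whereas $k$ matchings are required and suffice when $k$ is odd. Taking $q = \chi'(K_k)$ in the previous paragraph yields ${\rm arb}(G) \le k-1$ when $k = {\rm acy}(G)$ is even and ${\rm arb}(G) \le k = {\rm acy}(G)$ when $k$ is odd, which is the assertion; in particular the odd case recovers the original HMS bound, so the statement is a genuine refinement of HMS.

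The one genuinely nontrivial idea here --- the hard part, such as it is --- is recognizing that the Kempe forests must be grouped along matchings of the color graph rather than merged greedily. A naive union of two of the star forests from the usual HMS construction can contain a cycle (for instance a $6$-cycle built from a third color), so some combinatorial structure on the index set is unavoidable, and the chromatic index of $K_k$ is exactly the quantity that governs how efficiently the forests can be packed. Everything else --- the disjointness of the sets $V_i \cup V_j$ along a matching and the standard $1$-factorization of $K_{2m}$ --- is routine.
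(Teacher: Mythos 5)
Your proof is correct and takes essentially the same route as the paper: both group the $\binom{k}{2}$ Kempe forests along matchings of the color set, using that vertex-disjoint forests union to a forest, and both arrive at $k-1$ groups for even $k$ and $k$ for odd $k$. Your version makes explicit what the paper leaves implicit in its counting, namely that the bundling is a $1$-factorization of $K_k$ and that the relevant quantity is the chromatic index $\chi'(K_k)$.
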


\paragraph{}
\begin{proof}
Assume that the acyclic chromatic number is $c$. 
This means that there are $c(c-1)/2$ different Kempe chains and 
that each of these chains is either a forest or empty. 
If $c$ is even, then we can bundle the Kempe colors into $c/2$ disjoint pairs.
Bundling the Kempe chains as such gives now $(c-1)$ color types
and so $c(c-1)/2/(c/2) = c-1$ forests.
In the case when $c$ is odd, we can only form $(c-1)/2$ parts and need
to leave one color alone. We count then $[c (c-1)/2]/((c-1)/2) = c$ forests. \\
\end{proof}

\paragraph{}
We had already made use of this in \cite{ThreeTreeTheorem} in the case $c=4$, 
where we have $6$ different Kempe chains $AB,AC,AD,BC,BD,CD$, lead to the 3 type of forests,
the union of $AB,CD$ Kempe chains, the union of $AC,BD$ Kempe chains and the union
of $AD,BC$ Kempe chains. Since the vertex sets of the AC and BD Kempe chains are disjoint,
the union of the AC forest and BD forest remains a forest. 

\section{Remarks}

\paragraph{}
For all 1-manifolds, we have  ${\rm arb}(G)=2, {\rm chr}(G) \in \{ 2,3\}$ and ${\rm acy}(G)=3$.
For all 2-spheres, ${\rm arb}(G)=3, {\rm chr}(G) \in \{3,4\}$ by the {\bf 4-color theorem}
with ${\rm chr}(G)=3$ characterized by {\bf Eulerian 2-spheres} (2 spheres for which every
vertex degree is even, something which happens for example if $G$ is a Barycentric refinement), 
and ${\rm acy}(G) \in \{4,5\}$, where the case $5$ only happens for prisms. Prisms are 
very special 2-spheres for example because they are the only non-prime spheres in the Zykov
monoid. 

\paragraph{}
It had been a conjecture of Gr\"unbaum proven by Borodin in 1979 \cite{Borodin1979}, that
for planar graphs ${\rm acy}(G) \leq 5$. 
This came after ${\rm acy}(G) \leq 7$ \cite{AlbertsonBerman1977}.
We proved that ${\rm acy}(G) \leq 4$
for planar graphs, unless we have a prismatic graph. This improves on Gr\"unbaum
who by the way already pointed out that graphs like the octahedron have acyclic chromatic
number $5$. 

\paragraph{}
For other 2-manifold types, we know of cases with chromatic number
${\rm chr}(G) \in \{3,4,5\}$. A conjecture of 
Albertson and Stromquist states that for 2-manifolds (graphs for which every unit sphere
is a cyclic graph with 4 or more vertices), no larger chromatic number than 5 is possible. 
Still for $2$-manifolds, we have ${\rm arb}(G)=3$ for $3$-spheres and ${\rm arb}(G)=4$ for all
other topological types. We have so far only seen 
${\rm acy}(G) \in \{4,5\}$ for 2-manifolds. Even if the Albertson-Stromquist conjecture
should hold and ${\rm chr}(G) \leq 5$ for all 2-manifolds, it could still 
be that the acyclic chromatic number could be bigger than 5 for some manifolds. 

\paragraph{}
Peter Tait proved that the edge arboricity ${\rm arb}(G)$ of the dual $G^*$ of a 2-sphere
$G$ is less or equal than 3. This follows directly from the 4-color theorem:
with a vertex 4-coloring of $G$, one
has immediately an {\bf edge coloring} of $G^*$ with 3 colors. Given a 4 coloring $(0,a,b,c)$
of the vertices of $G$, one can identify the elements $0,a,b,c$ as  as elements of the Klein
4-group and define the edge coloring $f( (a,b) ) = a+b$. This is not an edge coloring of $G$
but an edge coloring of $G^*$. Conversely, if
a 3-coloring $(a,b,c)$ of the edge set of $G^*$ (which agrees with the edge set of $G$) 
is given, we necessarily have the colors $a,b,c$ in each triangle of $G$. 
The property $a+b+c=0$ can be seen as a {\bf zero curl condition}, implying
that this ``vector field" comes from a gradient field so that $f(a,b) = b-a$ which is $b+a$
in the Klein 4-group. This is explained for example in \cite{Aigner2015}. 
Note however that the edge coloring number of $G^*$ is larger than the arboricity of $G^*$
which is $2$ because the vertex degree of $G^*$ is constant $3$. 

\paragraph{}
The arboricity ${\rm arb}(G)$ of a graph $G$ is a measure for the network's density.
It is the minimal number of forests that partition the graph and so is a packing number.
By the {\bf Nash-Williams theorem} \cite{NashWilliams,NashWilliams1964}, it is
is the smallest integer $k$ larger or equal than the {\bf Nash-Williams bound}
$W(G) = {\rm max}_{H \subset G} |E_H|/(|V_H|-1)$ over all induced 
sub-graphs $H=(V_H,E_H)$ of $(V,E)$. Unlike the arboricity which is the edge arboricity,
the vertex arboricity does not have such a formula. Indeed, determining vertex
arboricity is NP hard, while determining edge arboricity is of polynomial difficulty. 
For more results on vertex arboricity, see \cite{JensenToft}. 
By the way, also the problem of acyclic coloring is NP complete \cite{ColemanCai}.

\paragraph{}
The {\bf empty graph} $0$ is the $(-1)$-sphere. The {\bf 1-point graph} $1$ is defined to be
{\bf contractible}. A {\bf d-sphere} is a finite simple graph for which the unit sphere is
a $(d-1)$-sphere and the removal of some vertex $v$ produces a contractible graph
$G-v$. A graph is {\bf contractible} if there is a vertex with contractible unit
sphere $S(v)$ such that also $G-v$ is contractible. A {\bf d-manifold} is a finite simple graph 
for which every unit sphere $S(v)$ is a $(d-1)$-sphere. 
The smallest arboricity which a d-manifold can achieve is $d+1$, obtained 
by cross polytopes.  For $d \geq 3$, the arboricity can be arbitrarily large for any 
topological type (already pointed out in \cite{ArboricityManifolds}): 

\begin{coro}
a) For any d-manifold type with $d>2$ there are discrete manifolds for which 
the acyclic chromatic number is arbitrarily large. \\
b) The smallest arboricity which can be achieved for d-manifolds is $d+1$. 
We do not know whether the lower bound $d+1$ can be reached for any non-sphere. 
\end{coro}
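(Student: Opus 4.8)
The plan is to treat the two parts separately: part (a) will follow by combining a known construction with Theorem~1, while part (b) is an elementary Nash--Williams computation together with one induction on sphere dimension. For (a), the HMS inequality ${\rm arb}(G) \le {\rm acy}(G)$ recorded in Theorem~1 reduces the claim to producing, inside any prescribed $d$-manifold type with $d \ge 3$, manifolds of arbitrarily large edge arboricity, which is the content of \cite{ArboricityManifolds}. The mechanism is that for $d \ge 3$ one has $d+1 \ge 4$, so the boundary complex of the cyclic polytope $C(n,d+1)$ is a $d$-sphere whose $1$-skeleton is the complete graph $K_n$ (cyclic polytopes are $\lceil (d+1)/2 \rceil$-neighborly, hence $2$-neighborly here), and for $n$ large this is a discrete $d$-sphere in the present sense. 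Given a $d$-manifold $M$ of the desired topological type, one forms the combinatorial connected sum $M \,\#\, \partial C(n,d+1)$: connected sum with a sphere does not change the topological type, and the result still contains a complete graph on all but $O(d)$ of the $n$ vertices. Since ${\rm arb}(K_m) = \lceil m/2\rceil$ by the Nash--Williams formula, the arboricity of these manifolds tends to $\infty$ with $n$, hence so does ${\rm acy}$.

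For the lower bound in (b), the point is that every vertex of a $d$-manifold has degree at least $2d$. I would first prove by induction on $k$ that every discrete $k$-sphere has at least $2(k+1)$ vertices: the base case $S^0$ has $2$ vertices, and for a $k$-sphere $Y$ one picks a vertex $v$, notes that its unit sphere is a $(k-1)$-sphere with at least $2k$ vertices by induction, and observes that $Y$ cannot equal the closed star of $v$ (that star is a cone, hence contractible, whereas a sphere is not), so there is at least one further vertex, giving $|V(Y)| \ge 1 + 2k + 1 = 2(k+1)$. Applying this to $S(v)$, whose vertex set is the neighbourhood of $v$, gives ${\rm deg}(v) \ge 2d$, hence $2|E| = \sum_v {\rm deg}(v) \ge 2d|V|$ and $|E|/(|V|-1) \ge d|V|/(|V|-1) > d$. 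By the Nash--Williams theorem ${\rm arb}(G) = \lceil W(G)\rceil \ge d+1$.

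For achievability I would verify that the $d$-dimensional cross polytope $X_d$ --- the $(d+1)$-fold join of $S^0$, i.e. the complete multipartite graph with $d+1$ parts of size $2$ --- has ${\rm arb}(X_d) = d+1$. Here $|V(X_d)| = 2(d+1)$ and $|E(X_d)| = \binom{2(d+1)}{2} - (d+1) = 2d(d+1)$, since the non-edges form a perfect matching. For a subgraph $H$ on $m$ vertices at most $\max(0, m-(d+1))$ of those matched pairs can be fully contained, so $|E_H| \le \binom{m}{2} - \max(0, m-d-1)$, and a routine quadratic estimate gives $|E_H| \le (d+1)(m-1)$ for every $2 \le m \le 2(d+1)$. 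Thus $W(X_d) \le d+1$, and together with the lower bound from (b) this yields ${\rm arb}(X_d) = d+1$, so the minimum of ${\rm arb}$ over all $d$-manifolds is exactly $d+1$, attained by cross polytopes.

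The step I expect to be most delicate is, in (a), performing the connected sum correctly inside the combinatorial category and checking that a large clique survives the gluing --- straightforward in spirit but fiddly, which is why it is cited rather than redone here; in (b) the only mild nuisance is the subgraph count for $X_d$, and the claim that the closed star of a vertex in a sphere is contractible. The final assertion of (b) --- whether $d+1$ is attained by some non-sphere --- is explicitly left open and is not something the proof addresses.
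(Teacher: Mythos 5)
Your part (b) is correct and in fact more complete than the paper's own treatment: the paper states $\deg(v)\ge 2d$ and then writes $E/(V-1)\ge d+1$, which does not literally follow from $E\ge dV$; your chain $|E|/(|V|-1)>d$, hence $\lceil W(G)\rceil\ge d+1$, is the right way to close the lower bound, and your explicit Nash--Williams computation for the cross polytope supplies the achievability that the paper merely asserts. Your reduction of part (a) to producing manifolds of large arboricity and then invoking ${\rm arb}(G)\le {\rm acy}(G)$ is also exactly the paper's strategy.

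The gap is in your construction for part (a). In this paper a $d$-manifold is a \emph{graph} all of whose unit spheres $S(v)$ (induced subgraphs on neighbourhoods) are $(d-1)$-spheres; the simplicial complex it carries is the clique (Whitney) complex. An easy induction shows such a graph has clique number exactly $d+1$, so a discrete $d$-manifold can never contain $K_{d+3}$, let alone a complete graph on $n-O(d)$ vertices. The boundary complex of the cyclic polytope $C(n,d+1)$ is $2$-neighborly, so its $1$-skeleton is $K_n$ --- but precisely for that reason that $1$-skeleton is \emph{not} a discrete $d$-sphere in the present sense (the unit sphere of any vertex is $K_{n-1}$, which is contractible and hence not a $(d-1)$-sphere), and any subdivision that turns it into a flag complex destroys the complete graph. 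So the clique-based route to large arboricity is closed off in this category, and the connected-sum step inherits the same defect. The paper instead drives up the Nash--Williams ratio without creating large cliques: refine until some edge $e=(x,y)$ has a $(d-2)$-sphere link $S(x)\cap S(y)$ with many vertices, then repeatedly subdivide $e$; each subdivision adds one vertex and at least $|S(x)\cap S(y)|+1$ edges, stays inside the class of $d$-manifolds of the given topological type, and eventually forces $|E|/(|V|-1)$, hence ${\rm arb}$ and therefore ${\rm acy}$, past any target. You would need to replace your cyclic-polytope step with an argument of this kind (or another density-increasing operation internal to the category).
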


\begin{proof}
a) If we want to reach a target arboricity $a$, first make Barycentric refinements until 
for some edge $e=(a,b)$ the $(d-2)$-sphere $S(a) \cap S(b)$
has $a-1$ or more vertices. Now, every edge refinement of $e$ adds one vertex and
at least $a$ edge. Repeat this until $E/(V-1)$ is larger or equal than $a-1$.
But this means by the Nash-Williams theorem that the arboricity is larger or 
equal than $a$. \\
b) The Euler handshake formula shows $2E=\sum_{v \in V} {\rm deg}(v)$. 
The smallest $(d-1)$-sphere has $2(d-1)+2=2d$ vertices, so that ${\rm deg}(v) \geq 2d$. 
This shows $E/V \geq d$ and so $E/(V-1) \geq d+1$. 
\end{proof}

\paragraph{}
On every {\bf Erd\"os-R\'enyi probability space} $E(n,p)$, the expected value 
of the Nash-Williams functional is for $n>1$ equal to 
${\rm E}|_{n,p}[W(G)] = p n/2$ simply because the 
Nash-Williams ratio $W(H) = |E_H|/(|V_H|-1)$ for any sub-graph 
has the expectation $p n (n-1)/(n-1) = p n/2$. We do not know 
what the expectation of the arboricity is although. We know the 
expectation of Euler characteristic or inductive dimension but
the expectation of arboricity or chromatic number functionals on
$E(n,p)$ appears to be difficult to establish. 

\paragraph{}
Arboricity is related to various other packing or covering problems on graphs. 
The {\bf star arboricity} \cite{AlgorAlon1989}, the {\bf linear arboricity}
and the {\bf caterpillar arboricity} for example fit in as
${\rm star}(G) \geq {\rm cater}(G) \geq {\rm arb}(G) \geq {\rm star}(G)/2$, where
the last inequality follows from the fact that every forest can be colored with 
2 stars.  The arboricity ${\rm arb}(G) \geq {\rm cat}(G)$ is also an upper bound 
for the {\bf Lusternik-Schnirelmann category} ${\rm cat}(G)$ of the graph, 
which is the number of contractible graphs which are needed to cover the network. 
Since the {\bf augmented cup length}$ {\rm cup}(G)+1$  of the graph is a 
lower bound for the category, this cohomological notion is also a lower 
bound for the arboricity ${\rm arb}(G)$. 

\bibliographystyle{plain}

\end{document}